\def\th@plain{%
  \thm@notefont{}
  \itshape 
}
\def\th@definition{%
  \thm@notefont{}
  \normalfont 
}
\newtheorem{thm}{Theorem}[section]
\newtheorem{lemma}[thm]{Lemma}
\theoremstyle{remark}
\newcommand{\argmin}{\mathop{\mathrm{argmin}}}
\newcommand{\st}{\mathop{\mathrm{subject\,\,to}}}
\def\R{\mathbf{R}}
\begin{document}

\title{An ADMM Algorithm for a Generic $\ell_0$ Sparse Overlapping Group Lasso Problem}
\author{Youming Zhao\\
Email: \href{mailto:youming0.zhao@gmail.com}{youming0.zhao@gmail.com}
}

\date{First draft: March 31, 2022\quad Last update: \today}
\maketitle
\abstract{We present an alternating direction method of multipliers (ADMM) for a generic overlapping group lasso problem, 
where the groups can be overlapping in an arbitrary way. Meanwhile, we prove the lower bounds and upper bounds for both 
the $\ell_1$ sparse group lasso problem and the $\ell_0$ sparse group lasso problem. Also, we propose the algorithms for computing these bounds.}

\section{Generic $\ell_0$ sparse overlapping group lasso}
The generic $\ell_0$ sparse overlapping group lasso problem is defined as
\begin{equation}\label{L0-sparse-overlap-group-lasso}
\min_{x\in\R^n} \left \{F(x):=\frac{1}{2s}\left  \|x-v \right \|^2 +\lambda_0 \|x\|_0 + \lambda_1 \sum_{i=1}^m \|x_{G_i}\|_2\right \}
\end{equation}
where $m$ denotes the number of groups, and $G_i\subseteq \{1,2,\ldots,n\}$ contains the feature indices of the $i$-th group. Here $s$ is the step size employed to get $v$ based on $x$. Note that $\bigcap_{i=\{1,\ldots,m\}} G_i\neq\emptyset$. Now consider the problem,
\begin{equation}\label{ADMM-L0-overlap-group-lasso}
\begin{array}{ll}
\operatorname{minimize}_{x_{G_i}\in\R^{n_i},z\in\R^n}& \frac{1}{2s}\left  \|z-v \right \|^2 +\lambda_0 \|z\|_0 + \lambda_1 \sum_{i=1}^m \|x_{G_i}\|_2\\
\st & x_{G_i}-z_i=0,\quad i=1,2,\ldots,m
\end{array}
\end{equation}
where $z_i\in\R^{n_i}$ is defined by $(z_i)_j=z_{G(i,j)}=z_g$. Here, $G(i,j)=g$ denotes the global index (location) of the $j$-th element in the set (group) $G_i$. Hence, $z$ is a linear function of $z_{i}, i\in\{1,\ldots,m\}$.


The augmented Lagrangian for \eqref{ADMM-L0-overlap-group-lasso} is
\begin{equation}\label{augmented-ADMM-L0-overlap-group-lasso}
L_{\rho}(x_{G_i},z,y_{G_i})= \sum_{i=1}^m \left[\lambda_1\|x_{G_i}\|_2+ y_{G_i}^T(x_{G_i}-z_{i})+\frac{\rho}{2}\|x_{G_i}-z_{i}\|_2^2\right]+\frac{1}{2s}\left  \|z-v \right \|^2 +\lambda_0 \|z\|_0.
\end{equation}

\begin{align}
&x_{G_i}^{k+1}:=\argmin_{x_{G_i}} \left( \lambda_1  \|x_{G_i}\|_2+x_{G_i}^Ty^k_{G_i}+\frac{\rho}{2}\|x_{G_i}-z_{i}^{k+1}\|_2^2\right),\quad i=1,2,\ldots,m \\
&z^{k+1}:=\argmin_{z} \left(\frac{1}{2s}\left  \|z-v \right \|_2^2 +\lambda_0 \|z\|_0 +\sum_{i=1}^{m}\left(\frac{\rho}{2}\|x_{G_i}^{k+1}-z_{i}\|_2^2-z_{i}^Ty^k_{G_i}\right)\right) \\
&y^{k+1}_{G_i}:=y^k_{G_i} +\rho (x_{G_i}^{k+1}-z_{i}^{k+1})
\end{align}
where $x_{G_i}$ and $z$ are primal variables, and $y_{G_i}$ are dual variables. 
The $x$-update is actually a group lasso problem and can be solved with the proximal operator of group lasso as follows.
\begin{align*}
  x_{G_i}^{k+1}&=\argmin_{x_{G_i}} \left( \lambda_1  \|x_{G_i}\|_2+x_{G_i}^Ty^k_{G_i}+\frac{\rho}{2}\|x_{G_i}-z_{i}^{k+1}\|_2^2\right) \\
  &=\argmin_{x_{G_i}}\left(\frac{1}{2}\|x_{G_i}-z_{i}^{k+1}\|_2^2+x_{G_i}^T\frac{y^k_{G_i}}{\rho}+ \frac{\lambda_1}{\rho}\|x_{G_i}\|_2\right)  \\
  &=\argmin_{x_{G_i}}\left(\frac{1}{2}\|x_{G_i}-(z_{i}^{k+1}-\frac{y^k_{G_i}}{\rho})\|_2^2+ \frac{\lambda_1}{\rho}\|x_{G_i}\|_2\right) \\
  &=S_{\lambda_1/\rho}(z_{i}^{k+1}-\frac{y^k_{G_i}}{\rho}),\quad i=1,2,\ldots,m
\end{align*}
where $S_{\lambda}(\cdot)$ is a soft-thresholding operator defined as
\[
S_{\lambda}(a)=(\|a\|_2-\lambda)_+\frac{a}{\|a\|_2}.
\]

Now we derive the solution to the $z$-update.
\begin{dmath*}
z^{k+1}=\argmin_z\left(\frac{1}{2s}\left  \|z-v \right \|_2^2 +\lambda_0 \|z\|_0+\sum_{i=1}^{m}\left(\frac{\rho}{2}\|x_{G_i}^{k+1}-z_{i}\|_2^2-z_{i}^Ty^k_{G_i}\right) \right)
=\argmin_z\sum_{g=1}^{n}\left(\frac{1}{2s}(z_g-v_g)^2 +\lambda_0 \|z_g\|_0+\sum_{G(i,j)=g}\left(\frac{\rho}{2}(z_{i})_j^2-(y_i^k)_j(z_{i})_j -\rho(x_i^{k+1})_j(z_{i})_j\right)\right)
=\argmin_z\sum_{g=1}^{n}\left(\frac{1}{2s}(z_g-v_g)^2 +\lambda_0 \|z_g\|_0+\frac{k_g\rho}{2}z_g^2-z_g\sum_{G(i,j)=g}\left((y_i^k)_j +\rho(x_i^{k+1})_j\right)\right)
=\argmin_z\sum_{g=1}^{n}\left(\frac{1}{2s}z_g^2-\frac{1}{s}v_gz_g +\frac{k_g\rho}{2}z_g^2-z_g\sum_{G(i,j)=g}\left((y_i^k)_j +\rho(x_i^{k+1})_j\right)+\lambda_0 \|z_g\|_0\right)
=\argmin_z\sum_{g=1}^{n}\left((\frac{1}{2s}+\frac{k_g\rho}{2})z_g^2-\frac{v_g}{s}z_g -z_g\sum_{G(i,j)=g}\left((y_i^k)_j +\rho(x_i^{k+1})_j\right)+\lambda_0 \|z_g\|_0\right)
=\argmin_z\sum_{g=1}^{n}\left((\frac{1}{2s}+\frac{k_g\rho}{2})z_g^2-z_g\left(\frac{v_g}{s}+\sum_{G(i,j)=g}\left((y_i^k)_j +\rho(x_i^{k+1})_j\right)\right)+\lambda_0 \|z_g\|_0\right)
=\argmin_z\sum_{g=1}^{n}\left(\frac{1}{2}z_g^2-z_g\frac{v_g/s+\sum_{G(i,j)=g}\left((y_i^k)_j +\rho(x_i^{k+1})_j\right)}{1/s+k_g\rho}+\frac{\lambda_0}{1/s+k_g\rho} \|z_g\|_0\right)
\end{dmath*}
equivalently,
\begin{dmath*}
z_g^{k+1}=\argmin_{z_g} \left(\frac{1}{2}z_g^2-z_g\frac{v_g/s+\sum_{G(i,j)=g}\left((y_i^k)_j +\rho(x_i^{k+1})_j\right)}{1/s+k_g\rho}+\frac{\lambda_0}{1/s+k_g\rho} \|z_g\|_0\right)
=\argmin_{z_g} \left(\frac{1}{2}\left(z_g-\frac{v_g/s+\sum_{G(i,j)=g}\left((y_i^k)_j +\rho(x_i^{k+1})_j\right)}{1/s+k_g\rho}\right)^2+\frac{\lambda_0}{1/s+k_g\rho} \|z_g\|_0\right)
=H_{\sqrt{2\lambda_0/(1/s+k_g\rho)}}\left(\frac{v_g/s+\sum_{G(i,j)=g}\left((y_i^k)_j +\rho(x_i^{k+1})_j\right)}{1/s+k_g\rho}\right),\quad
\forall g\hiderel{\in}\{1,2,\ldots,n\}
\end{dmath*}
where $H_{\lambda}(\cdot)$ is a hard-thresholding operator defined as follows.
\begin{equation*}
H_{\lambda}(u)=\begin{cases}
                     u, & \mbox{if } |u|>\lambda \\
                     0, & \mbox{otherwise}.
                   \end{cases}
\end{equation*}
Note that when $u$ is a vector, $H_{\lambda}(\cdot)$ is an element-wise hard-thresholding operator.

Thus, we obtain the final update formulas for $x,z$ and $y$ as follows.
\begin{align*}
&x_{G_i}^{k+1}=S_{\lambda_1/\rho}(z_{i}^{k+1}-\frac{y^k_{G_i}}{\rho}),\quad \forall i\in \{1,2,\ldots,m\} \\
&z_g^{k+1}=H_{\sqrt{2\lambda_0/(1/s+k_g\rho)}}\left(\frac{v_g/s+\sum_{G(i,j)=g}\left((y_i^k)_j +\rho(x_i^{k+1})_j\right)}{1/s+k_g\rho}\right),\quad
\forall g\hiderel{\in}\{1,2,\ldots,n\}\\
&y^{k+1}_{G_i}=y^k_{G_i} +\rho (x_{G_i}^{k+1}-z_{i}^{k+1}),\quad \forall i\in \{1,2,\ldots,m\}
\end{align*}

\subsection{The matrix form of the $\ell_0$ sparse overlapping group lasso}
Define $\tilde{x}=[x^T_{G_1},x^T_{G_2},\ldots,x^T_{G_m}]^T\in\R^{\tilde{n}}$ and $\tilde{z}=[z^T_{G_1},z^T_{G_2},\ldots,z^T_{G_m}]^T\in\R^{\tilde{n}}$ where $\tilde{n}=\sum_{i=1}^{m}n_i$. Then $\tilde{z}$ can be represented as
\[
\tilde{z}=Gz
\]
where each row of $G$ has only one entry being $1$ and other entries being $0$. The corresponding definition of $z$-update becomes
\begin{dmath*}
z^{k+1}=\argmin_{z} \left(\frac{1}{2s}\left  \|z-v \right \|_2^2 +\lambda_0 \|z\|_0 +\frac{\rho}{2}\|\tilde{x}^k-Gz\|_2^2-(\tilde{y}^k)^{T}Gz\right)
=\argmin_{z} \left(\frac{1}{2}\left(\frac{z^Tz}{s}+\rho z^TG^TGz\right)-\left(\frac{v^Tz}{s} +(\rho\tilde{x}^k)^{T} Gz+(\tilde{y}^k)^{T}Gz\right) +\lambda_0 \|z\|_0 \right)
=\argmin_{z} \left(\frac{1}{2}z^T\left(\frac{I}{s}+\rho G^TG\right)z-\left(\frac{v^Tz}{s}+(\rho\tilde{x}^k)^{T}Gz+(\tilde{y}^k)^{T}Gz\right) +\lambda_0 \|z\|_0 \right)
=\argmin_{z} \left(\frac{1}{2}z^T\left(\frac{I}{s}+\rho G^TG\right)z-\left(\frac{v}{s}+G^T(\rho\tilde{x}^k+\tilde{y}^k)\right)^Tz +\lambda_0 \|z\|_0 \right)
\end{dmath*}
where $\tilde{y}=[y^T_{G_1},y^T_{G_2},\ldots,y^T_{G_m}]^T\in\R^{\tilde{n}}$. We observe that $G^TG$ is a diagonal matrix of which the $g$-th diagonal entry corresponds to the number of groups that the global variable $z_g$ involves. Since $G^TG$ is positive semidefinite and $s,\rho>0$, $I/s+\rho G^TG$ is definitely a positive definite matrix. Let $I/s+\rho G^TG=\operatorname{diag}(c_1,\ldots,c_n)$ and $C=\operatorname{diag}(\sqrt{c_1},\ldots,\sqrt{c_n})$ where $c_g=1/s+ k_g\rho$. Thus, $C^TC=I/s+\rho G^TG$. With this setting, we have
\begin{dmath*}
z^{k+1}=\argmin_{z} \left(\frac{1}{2}z^TC^TCz-\left(\frac{v}{s}+G^T(\rho\tilde{x}^k+\tilde{y}^k)\right)^TC^{-1}Cz +\lambda_0 \|z\|_0 \right)
=\argmin_{z} \left(\frac{1}{2}\|Cz-C^{-1}\left(\frac{v}{s}+G^T(\rho\tilde{x}^k+\tilde{y}^k)\right)\|_2^2 +\lambda_0 \|z\|_0 \right).
\end{dmath*}
Since $C$ is diagonal, the $z$-update reduces to $n$ subproblems as follows.
\begin{dmath*}
z_g^{k+1}=\argmin_{z_g} \left(\frac{1}{2}
\left(\sqrt{c_g}z_g-\frac{1}{\sqrt{c_g}}\cdot\left[\frac{v}{s}+G^T(\rho\tilde{x}^k+\tilde{y}^k)\right]_g\right)^2 +\lambda_0 \|z_g\|_0 \right)
=\argmin_{z_g} \left(\frac{1}{2}
\left(z_g-\frac{1}{c_g}\cdot\left[\frac{v}{s}+G^T(\rho\tilde{x}^k+\tilde{y}^k)\right]_g\right)^2 +\frac{\lambda_0}{c_g}\|z_g\|_0 \right)
=H_{\sqrt{2\lambda_0/c_g}}\left(\frac{1}{c_g}\cdot\left[\frac{v}{s}+G^T(\rho\tilde{x}^k+\tilde{y}^k)\right]_g\right)
\end{dmath*}
which is exactly the same as the previous counterpart result.

\subsection{Solving the dual problem via ADMM}
The dual of \eqref{ADMM-L0-overlap-group-lasso} is,
\begin{align*}
&\min_{x_{G_i},z}\frac{1}{2s}\left \|z-v \right \|^2 +\lambda_0 \|z\|_0 + \lambda_1 \sum_{i=1}^m \|x_{G_i}\|_2+ \sum_{i=1}^m y_{G_i}^T(x_{G_i}-z_{i})\\
=&\min_{z}\left(\frac{1}{2s}\left  \|z-v \right \|^2 +\lambda_0 \|z\|_0-\sum_{i=1}^m y_{G_i}^Tz_{i}\right)
+\min_{x_{G_i}} \left(\sum_{i=1}^m (\lambda_1\|x_{G_i}\|_2 + y_{G_i}^Tx_{G_i}) \right)\\
=&\min_{z}\left(\frac{1}{2s}(z^Tz-2v^Tz+v^Tv)-\tilde{y}^T\tilde{z} +\lambda_0 \|z\|_0\right)
+\sum_{i=1}^m\min_{x_{G_i}} \left( \lambda_1\|x_{G_i}\|_2 + y_{G_i}^Tx_{G_i} \right)\\
=&\min_{z}\left(\frac{1}{2s}(z^Tz-2v^Tz-2s\tilde{y}^TGz+v^Tv)+\lambda_0 \|z\|_0\right)
+\sum_{i=1}^m\min_{x_{G_i}} \left( \lambda_1\|x_{G_i}\|_2 + y_{G_i}^Tx_{G_i} \right)\\
=&\min_{z}\left(\frac{1}{2s}(z^Tz-2(v+sG^T\tilde{y})^Tz+v^Tv)+\lambda_0 \|z\|_0\right)
+\sum_{i=1}^m\min_{x_{G_i}} \left( \lambda_1\|x_{G_i}\|_2 + y_{G_i}^Tx_{G_i} \right)\\
=&\min_{z}\left(\frac{1}{2s}\left(\|z-(v+sG^T\tilde{y})\|^2+v^Tv-(v+sG^T\tilde{y})^T(v+sG^T\tilde{y})\right)+\lambda_0 \|z\|_0\right)
+\sum_{i=1}^m\min_{x_{G_i}} \left( \lambda_1\|x_{G_i}\|_2 + y_{G_i}^Tx_{G_i} \right)\\
=&\min_{z}\left(\frac{1}{2s}\|z-(v+sG^T\tilde{y})\|^2+\lambda_0 \|z\|_0\right)
+\sum_{i=1}^m\min_{x_{G_i}} \left( \lambda_1\|x_{G_i}\|_2 + y_{G_i}^Tx_{G_i} \right)+\frac{1}{2s}(\|v\|^2-\|v+sG^T\tilde{y}\|^2)\\
=&\min_{z}\left(\frac{1}{2s}\|z-(v+sG^T\tilde{y})\|^2+\lambda_0 \|z\|_0\right)
-\frac{1}{2s}\|v+sG^T\tilde{y}\|^2+\frac{1}{2s}\|v\|^2,\quad \|y_{G_i}\|_2\le \lambda_1, i=1,\ldots,m
\end{align*}
After dropping the constant term, the dual problem of \eqref{ADMM-L0-overlap-group-lasso} becomes
\begin{equation}\label{dual-problem-L0-group-lasso}
\displaystyle\max_{\tilde{y}\in\Omega}\min_{z\in\R^n} \left\{\psi(z,\tilde{y})=\frac{1}{2s}\|z-(v+sG^T\tilde{y})\|^2+\lambda_0 \|z\|_0
-\frac{1}{2s}\|v+sG^T\tilde{y}\|^2\right\}.
\end{equation}
where $\Omega$ is defined as follows:
\[
\Omega=\{\tilde{y}\in\R^{\tilde{n}}\ |\ \|y_{G_i}\|_2\le \lambda_1, i=1,2,\ldots,m\}.
\]
For a given $\tilde{y}^{k-1}$, the optimal $z$ minimizing $\psi(z,\tilde{y}^{k-1})$ in \eqref{dual-problem-L0-group-lasso} is given by
\begin{equation}\label{sol-z-dual-problem-L0-group-lasso}
z^k=H_{\sqrt{2s\lambda_0}}(v+sG^T\tilde{y}^{k-1}).
\end{equation}
Plugging \eqref{sol-z-dual-problem-L0-group-lasso} into \eqref{dual-problem-L0-group-lasso}, we get the following maximization problem with respect to $\tilde{y}$:
\begin{equation}\label{min-y-dual-problem-L0-group-lasso}
\max_{\tilde{y}\in\Omega}\ \{\omega(\tilde{y})=-\psi(z^k,\tilde{y})\}
\end{equation}
which is equivalent to the following problem
\begin{equation}
\max_{\tilde{y}\in\Omega}\ \tilde{y}^T G(z^k - 2v)
\end{equation}
which can be solved analytically as follows.
\begin{equation}
\tilde{y}_{G_i}=\frac{[G(z^k - 2v)]_i}{\|[G(z^k - 2v)]_i\|_2}
\end{equation}
where $[G(z^k - 2v)]_i\in\mathbf{R}^{n_i}$ denotes the counterpart corresponding to the group $G_i$. Finally, 
our methodology for minimizing the problem defined in \eqref{ADMM-L0-overlap-group-lasso} is to alternate update $z$ and $\tilde{y}$.




\subsection{The bounds on the optimal value of the overlapping group lasso}
Before presenting the results regarding the bounds of the optimal value of the $\ell_0$ sparse group lasso, we introduce three lemmas which lead to the upcoming theorem. For completeness, we describe a well known result as the following lemma, namely the quadratic mean (QM) is no less than the arithmetic mean (AM).
\begin{lemma}[QM$\ge$AM]\label{QM-AM}
Given $\mathbf{x}\in\R^n$, the following
\[
\sqrt{\frac{\sum_{i=1}^{n}x_i^2}{n}}\ge \frac{\sum_{i=1}^{n}|x_i|}{n}
\]
holds. The equality holds if and only if $x_1=x_2=\ldots=x_n$.
\end{lemma}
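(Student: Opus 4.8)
The plan is to reduce the statement to the Cauchy--Schwarz inequality applied to the vector of absolute values $(|x_1|,\dots,|x_n|)$ and the all-ones vector $\mathbf{1}=(1,\dots,1)\in\R^n$. First I would write
\[
\Big(\sum_{i=1}^{n}|x_i|\Big)^2 = \Big(\sum_{i=1}^{n}|x_i|\cdot 1\Big)^2 \le \Big(\sum_{i=1}^{n}|x_i|^2\Big)\Big(\sum_{i=1}^{n}1^2\Big) = n\sum_{i=1}^{n}x_i^2,
\]
using $|x_i|^2=x_i^2$. Dividing through by $n^2$ gives $\big(\tfrac{1}{n}\sum_{i}|x_i|\big)^2 \le \tfrac{1}{n}\sum_{i}x_i^2$, and since both sides are nonnegative, taking square roots yields the asserted inequality.

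For the equality case I would invoke the equality condition for Cauchy--Schwarz: equality holds if and only if $(|x_1|,\dots,|x_n|)$ is a scalar multiple of $\mathbf{1}$, i.e.\ if and only if $|x_1|=|x_2|=\cdots=|x_n|$. This is exactly the stated condition under the (implicit, and for our purposes harmless) assumption that the entries are nonnegative; otherwise the honest characterization is equality of the absolute values. An equally short alternative that sidesteps any citation is the Lagrange-type identity
\[
n\sum_{i=1}^{n}x_i^2 - \Big(\sum_{i=1}^{n}|x_i|\Big)^2 = \sum_{1\le i<j\le n}\big(|x_i|-|x_j|\big)^2 \ge 0,
\]
obtained by expanding both sides; the right-hand side is a sum of squares, which makes both the inequality and its equality condition transparent.

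There is no genuine obstacle in this lemma --- both routes are routine. The only point meriting a sentence of care is the equality characterization and, in particular, reconciling the "$|x_i|$ all equal" condition coming out of the proof with the way the statement is phrased in terms of the $x_i$ themselves. I would present the Cauchy--Schwarz argument as the main proof and mention the sum-of-squares identity as the clean justification of the equality case.
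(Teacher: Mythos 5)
Your proof is correct and follows essentially the same route as the paper: both apply Cauchy--Schwarz to the vector of absolute values paired with a (normalized) all-ones vector. Your remark about the equality case is also well taken --- since the lemma compares the quadratic mean with the mean of the $|x_i|$, the correct characterization of equality is $|x_1|=\cdots=|x_n|$, and the paper's own proof in fact derives exactly this condition for that step (the condition $x_1=\cdots=x_n$ stated in the lemma only becomes necessary for the further inequality $\sum_i|x_i|\ge\sum_i x_i$ appearing in the proof but not in the lemma's statement).
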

\begin{proof}
According to Cauchy-Schwartz inequality which says that given two vectors $\mathbf{x},\mathbf{y}\in\R^n$, $\|\mathbf{x}\|_2\|\mathbf{y}\|_2\ge |\mathbf{x}^T\mathbf{y}|$, we have
\begin{align*}
\sqrt{\frac{\sum_{i=1}^{n}x_i^2}{n}} &=\sqrt{\sum_{i=1}^{n}(\frac{|x_i|}{\sqrt{n}})^2}\cdot\overbrace{\sqrt{\sum_{i=1}^{n}(\frac{1}{\sqrt{n}})^2}}^{=1} \\
&\ge \frac{\sum_{i=1}^{n}|x_i|}{n}\ge \frac{\sum_{i=1}^{n}x_i}{n},
\end{align*}
where the equalities in the first and second inequalities hold if and only if $|x_1|=|x_2|=\cdots=|x_n|$ and $x_1=x_2=\cdots=x_n$, respectively. This completes the proof.
\end{proof}

\subsubsection{Lower bound on the overlapping group lasso}
\begin{lemma}[lower bound on the overlapping group lasso]
Given $\mathbf{x}\in\R^n$, $\mathbf{w}\in\R_{++}^m$ and some groups $G_i\subseteq \{1,2,\ldots,n\}, i=1,2,\ldots,m$, let $I_{G_i}(j)$ denote an indicator function whose value is $1$ if $j\in G_i$ and $0$ otherwise. Then, the following
\[
\|\mathbf{L}\mathbf{x}\|_1\le \sum_{i=1}^{m}w_i\|\mathbf{x}_{G_i}\|_2
\]
holds, where $\mathbf{L}=\operatorname{\mathbf{diag}}(\mathbf{l})$ with elements $l_j=\sum_{i=1}^{m}\frac{w_i}{\sqrt{|G_i|}}\odot \mathbb{I}(j\in G_i), j=1,\ldots,n$ and $\mathbb{I}(e)=1$ if $e$ is true, $0$ otherwise. The equality holds if and only if for every $G_i$, the entries of $\mathbf{x}_{G_i}$ are identical.
\end{lemma}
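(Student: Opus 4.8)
The plan is to collapse the weighted $\ell_1$ norm on the left into a sum of per-group $\ell_1$ norms, and then dominate each of those by the corresponding $\ell_2$ norm using Lemma~\ref{QM-AM}.

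First I would rewrite the left-hand side. Each $l_j$ is a sum of nonnegative quantities, so $l_j\ge 0$ and hence $\|\mathbf{L}\mathbf{x}\|_1=\sum_{j=1}^n l_j|x_j|$. Substituting the definition of $l_j$ and interchanging the two finite sums (over $j$ and over $i$) gives
\[
\|\mathbf{L}\mathbf{x}\|_1=\sum_{j=1}^n\Bigl(\sum_{i=1}^m\tfrac{w_i}{\sqrt{|G_i|}}\,\mathbb{I}(j\in G_i)\Bigr)|x_j|
=\sum_{i=1}^m\frac{w_i}{\sqrt{|G_i|}}\sum_{j\in G_i}|x_j|
=\sum_{i=1}^m\frac{w_i}{\sqrt{|G_i|}}\,\|\mathbf{x}_{G_i}\|_1 .
\]
Next, for each fixed $i$ I would apply Lemma~\ref{QM-AM} to the vector $\mathbf{x}_{G_i}\in\R^{|G_i|}$: rearranging $\sqrt{\|\mathbf{x}_{G_i}\|_2^2/|G_i|}\ge \|\mathbf{x}_{G_i}\|_1/|G_i|$ yields $\|\mathbf{x}_{G_i}\|_1\le \sqrt{|G_i|}\,\|\mathbf{x}_{G_i}\|_2$. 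Plugging this into each summand of the display and cancelling the factor $\sqrt{|G_i|}$ produces $\|\mathbf{L}\mathbf{x}\|_1\le\sum_{i=1}^m w_i\|\mathbf{x}_{G_i}\|_2$, which is the claim.

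For the equality characterization I would track when the chain above is tight. The only inequality invoked is the per-group QM$\ge$AM step, so $\|\mathbf{L}\mathbf{x}\|_1=\sum_i w_i\|\mathbf{x}_{G_i}\|_2$ forces $\|\mathbf{x}_{G_i}\|_1=\sqrt{|G_i|}\,\|\mathbf{x}_{G_i}\|_2$ for every $i$ (here I use that $w_i>0$, so no deficit in one group can be compensated by another). By the equality clause of Lemma~\ref{QM-AM} — precisely, the first inequality in its proof, comparing the quadratic mean with the mean of the absolute values — this happens exactly when $|x_j|$ is the same for all $j\in G_i$. Conversely, if that holds for every group the chain is an equality. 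Thus equality holds iff within each group the entries of $\mathbf{x}_{G_i}$ share a common absolute value; under the tacit sign assumption this is the stated condition that the entries of $\mathbf{x}_{G_i}$ be identical.

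I do not expect a genuine obstacle here: the inequality is essentially a one-line corollary of Lemma~\ref{QM-AM} once the double sum defining $\mathbf{L}\mathbf{x}$ is reorganized group-by-group. The only point requiring a little care is the equality analysis, where one must use the absolute-value version of the QM-AM equality condition, since both sides of the inequality depend on $\mathbf{x}$ only through $(|x_j|)_{j=1}^n$.
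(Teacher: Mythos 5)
Your argument is correct and is essentially the paper's own proof read in the opposite direction: the paper starts from $\sum_i w_i\|\mathbf{x}_{G_i}\|_2$, pulls out $\sqrt{|G_i|}$ to form the quadratic mean, applies Lemma~\ref{QM-AM} group by group, and then interchanges the sums over $i$ and $j$ to recover $\|\mathbf{L}\mathbf{x}\|_1$, exactly the steps you perform in reverse. Your remark that the equality case really requires only equal \emph{absolute values} within each group (the ``identical entries'' phrasing being a slight overstatement) is a fair and slightly more careful reading than the paper's own.
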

\begin{proof}
\begin{align*}
\sum_{i=1}^{m}w_i\|\mathbf{x}_{G_i}\|_2
&=w_1\|\mathbf{x}_{G_1}\|_2 + \cdots + w_m\|\mathbf{x}_{G_m}\|_2\\
&=\sum_{i=1}^{m} w_i\sqrt{\sum_{j=1}^{|G_i|}|x_j|^2}=\sum_{i=1}^{m} w_i\sqrt{|G_i|}\cdot\sqrt{\frac{\sum_{j=1}^{|G_i|}|x_j|^2}{|G_i|}}\\
&\ge \sum_{i=1}^{m}w_i\sqrt{|G_i|}\cdot \frac{\sum_{j=1}^{|G_i|}|x_j|}{|G_i|} =\sum_{i=1}^{m}\frac{w_i}{\sqrt{|G_i|}}\cdot\sum_{j=1}^{|G_i|}|x_j|\\
&=\sum_{j=1}^{n}\sum_{i=1}^{m}\left(\frac{w_i}{\sqrt{|G_i|}}\odot \mathbb{I}(j\in G_i)\right) |x_j|
\end{align*}
where $\mathbb{I}(e)$ is an indicator function defined as follows:
\[
\begin{cases}
  1, & \mbox{if $e$ is true} \\
  0, & \mbox{otherwise}.
\end{cases}
\]
The second line follows from the definition of $p$-norm ($p\ge 1$) and the third line from Lemma \ref{QM-AM}. The equality holds if and only if the entries belonging to the same group are identical for all the groups.
Let $\mathbf{L}=\operatorname{\mathbf{diag}}(\mathbf{l})$, the diagonal matrix with elements $l_j=\sum_{i=1}^{m}\frac{w_i}{\sqrt{|G_i|}}\odot \mathbb{I}(j\in G_i), j=1,\ldots,n$, so we have
\begin{equation}\label{lower-bound-L0-sparse-group-lasso}
\sum_{i=1}^{m}w_i\|\mathbf{x}_{G_i}\|_2\ge \|\mathbf{L}\mathbf{x}\|_1
\end{equation}
\end{proof}

\subsubsection{Computing the lower bound on the overlapping group lasso}
Since we have found the lower bound on the overlapping group lasso operator, the overlapping group lasso problem reduces to solving a weighted lasso problem as follows.
\begin{equation}\label{problem-lb-on-OGL}
\min_{\mathbf{x}\in\R^n}\left\{ f_{\text{GL\_lb}}(\mathbf{x}):= \frac{1}{2}\|\mathbf{x}-\mathbf{v}\|_2^2 + \lambda\|\mathbf{Lx}\|_1\right\}
\end{equation}
Since $f_{\text{GL\_lb}}(\mathbf{x})$ is separable w.r.t $\mathbf{x}$, this is equivalent to solving the following subproblem for each $i$.
\[
\min_{x_i\in\R}\left\{ f_{\text{GL\_lb}}(x_i):= \frac{1}{2}(x_i-v_i)^2 +\lambda l_i |x_i|\right\}
\]
If $x_i>0$, then $f_{\text{GL\_lb}}(x_i)= \frac{1}{2}(x_i-v_i)^2 +\lambda l_ix_i$. By the first-order optimality condition,
\[
\nabla f_{\text{GL\_lb}}(x_i)= x_i-v_i + \lambda l_i=0 \Longleftrightarrow x_i=v_i - \lambda l_i>0 \Longleftrightarrow v_i > \lambda l_i.
\]
For $x_i<0$, we have the following similar argument.
\[
\nabla f_{\text{GL\_lb}}(x_i)= x_i-v_i -\lambda l_i=0 \Longleftrightarrow x_i=v_i + \lambda l_i<0 \Longleftrightarrow v_i < -\lambda l_i.
\]
In the case of $x_i=0$, let $\nu$ be the subdifferential of $|x_i|$ at $x_i=0$, then $\nu\in[-1,1]$. Thus,
\[
0\in \partial f_{\text{GL\_lb}}(0)=0-v_i +\lambda l_i \nu \Longleftrightarrow \frac{v_i}{\lambda l_i}\in \nu \Longleftrightarrow |v_i|\le \lambda l_i
\]
where the RHS follows from the fact that $|\nu|\le 1$. To sum up, the solution to the subproblem is
\begin{equation}\label{soft-thresholding-operator-scalar}
x_i=\begin{cases}
v_i - \lambda l_i, & \mbox{if } v_i > \lambda l_i \\
0, & \mbox{if } |v_i|\le \lambda l_i \\
v_i + \lambda l_i, & \mbox{if }v_i < -\lambda l_i.
\end{cases}
\quad i=1,2,\ldots,n.
\end{equation}

\subsubsection{Upper bound on the overlapping group lasso}
\begin{lemma}[upper bound on the overlapping group lasso]
Given $\mathbf{x}\in\R^n$, $\mathbf{w}\in\R_{++}^m$ and some groups $G_i\subseteq \{1,2,\ldots,n\}, i=1,\ldots,m$, denote the total number of appearances in all groups by $k_j, j=1,\ldots,n$, and let $\mathbf{U}=\operatorname{\mathbf{diag}}(\sqrt{k_1}\|\mathbf{w}\|_2,\ldots,\sqrt{k_n}\|\mathbf{w}\|_2)$. Then, the following
\[
\sum_{i=1}^{m}w_i\|\mathbf{x}_{G_i}\|_2\le \|\mathbf{U}\mathbf{x}\|_2
\]
holds. The equality holds if and only if $\frac{\|\mathbf{x}_{G_1}\|_2}{w_1}=\ldots=\frac{\|\mathbf{x}_{G_m}\|_2}{w_m}$.
\end{lemma}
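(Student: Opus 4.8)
The plan is to peel the argument into two independent moves: a Cauchy--Schwarz step that collapses the weighted sum of group norms into a single Euclidean norm, followed by a bookkeeping identity that re-expresses the resulting sum of squared group norms as a weighted sum over the $n$ coordinates.

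First I would view $\sum_{i=1}^{m}w_i\|\mathbf{x}_{G_i}\|_2$ as the standard inner product in $\R^m$ between the vector $\mathbf{w}=(w_1,\ldots,w_m)$ and the vector $(\|\mathbf{x}_{G_1}\|_2,\ldots,\|\mathbf{x}_{G_m}\|_2)$. The Cauchy--Schwarz inequality (the same one already used in the proof of Lemma~\ref{QM-AM}) then gives
\[
\sum_{i=1}^{m}w_i\|\mathbf{x}_{G_i}\|_2 \le \|\mathbf{w}\|_2\,\sqrt{\textstyle\sum_{i=1}^{m}\|\mathbf{x}_{G_i}\|_2^2},
\]
with equality exactly when the two vectors are parallel, i.e.\ when $(\|\mathbf{x}_{G_i}\|_2)_i$ is a nonnegative scalar multiple of $(w_i)_i$; since every $w_i>0$, this is precisely the stated condition $\|\mathbf{x}_{G_1}\|_2/w_1=\cdots=\|\mathbf{x}_{G_m}\|_2/w_m$.

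Next I would expand $\sum_{i=1}^{m}\|\mathbf{x}_{G_i}\|_2^2=\sum_{i=1}^{m}\sum_{j\in G_i}x_j^2$ and interchange the order of summation. Each term $x_j^2$ is counted once for every group $G_i$ containing $j$, and by definition that count is $k_j$, so $\sum_{i=1}^{m}\|\mathbf{x}_{G_i}\|_2^2=\sum_{j=1}^{n}k_j x_j^2$. Taking square roots and folding the scalar $\|\mathbf{w}\|_2$ into the diagonal matrix gives $\|\mathbf{w}\|_2\sqrt{\sum_i\|\mathbf{x}_{G_i}\|_2^2}=\|\operatorname{\mathbf{diag}}(\sqrt{k_1}\|\mathbf{w}\|_2,\ldots,\sqrt{k_n}\|\mathbf{w}\|_2)\,\mathbf{x}\|_2=\|\mathbf{U}\mathbf{x}\|_2$. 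Chaining this identity with the Cauchy--Schwarz bound yields the inequality, and because this second move is an exact identity, the equality case is governed solely by the Cauchy--Schwarz step, which closes the characterization.

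There is no deep obstacle here; the only points needing a little care are in the equality analysis — correctly translating the ``parallel vectors'' condition using $w_i>0$, and verifying that the degenerate case $\mathbf{x}=\mathbf{0}$ (where all the ratios equal $0/w_i=0$ and both sides vanish) is consistent. I would also double-check that the convention $\|\mathbf{x}_{G_i}\|_2^2=\sum_{j\in G_i}x_j^2$ matches the notation used earlier, where $\mathbf{x}_{G_i}$ denotes the subvector of $\mathbf{x}$ indexed by $G_i$, so that the swap of summations is literally an equality rather than merely a relabelling.
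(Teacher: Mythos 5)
Your proof is correct and follows essentially the same route as the paper's: Cauchy--Schwarz applied to $\mathbf{w}$ against the vector of group norms, followed by the counting identity $\sum_{i}\|\mathbf{x}_{G_i}\|_2^2=\sum_{j}k_jx_j^2$ and absorbing $\|\mathbf{w}\|_2$ into the diagonal matrix. Your treatment of the equality case (translating linear dependence via $w_i>0$ and noting the $\mathbf{x}=\mathbf{0}$ case) is in fact slightly more careful than the paper's.
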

\begin{proof}
\begin{align*}
\sum_{i=1}^{m}w_i\|\mathbf{x}_{G_i}\|_2
&=w_1\|\mathbf{x}_{G_1}\|_2 + \cdots + w_m\|\mathbf{x}_{G_m}\|_2\\
&\le \sqrt{w_1^2+\cdots+w_m^2}\cdot \sqrt{\|\mathbf{x}_{G_1}\|_2^2+\cdots+\|\mathbf{x}_{G_m}\|_2^2}\\
&=\sqrt{\sum_{i=1}^{m} w_i^2}\cdot \sqrt{\sum_{g=1}^{n}k_gx_g^2}=\sqrt{\sum_{g=1}^{n}(\sum_{i=1}^{m} w_i^2)k_gx_g^2}
\end{align*}
where the second line follows from Cauchy-Schwarz inequality and the equality holds if and only if $\frac{\|\mathbf{x}_{G_1}\|_2}{w_1}=\ldots=\frac{\|\mathbf{x}_{G_m}\|_2}{w_m}$. Let $\mathbf{U}_0=\operatorname{\mathbf{diag}}(\sqrt{k_1},\ldots,\sqrt{k_n})$ and $\mathbf{w}=(w_1,\ldots,w_m)$, then we have
\begin{equation}\label{upper-bound-L0-sparse-group-lasso}
\sum_{i=1}^{m}w_i\|\mathbf{x}_{G_i}\|_2\le \|\mathbf{w}\|_2\cdot \sqrt{\mathbf{x}^T\operatorname{\mathbf{diag}}(k_1,\ldots,k_n)\mathbf{x}}=\|\mathbf{w}\|_2\cdot \|\mathbf{U}_0\mathbf{x}\|_2
\end{equation}
By the positive homogeneity of $\|\cdot\|_2$, $\|\mathbf{w}\|_2$ can be absorbed into $\mathbf{U}_0$ as follows.
\begin{equation}\label{upper-bound-L0-sparse-group-lasso-concise-form}
\sum_{i=1}^{m}w_i\|\mathbf{x}_{G_i}\|_2\le\|\mathbf{U}\mathbf{x}\|_2
\end{equation}
where $\mathbf{U}=\operatorname{\mathbf{diag}}(\mathbf{u})$ and $\mathbf{u}=(\sqrt{k_1}\|\mathbf{w}\|_2,\ldots,\sqrt{k_n}\|\mathbf{w}\|_2)$.
\end{proof}

\subsubsection{Computing the upper bound on the overlapping group lasso}
After replacing the overlapping group lasso operator with $\|\mathbf{U}\mathbf{x}\|_2$, the upper bound on the overlapping group lasso is equal to the optimal value of the following problem.
\begin{equation}\label{problem-ub-on-OGL}
\min_{\mathbf{x}\in\R^n}\left\{ f_{\text{GL\_ub}}(\mathbf{x}):= \frac{1}{2}\|\mathbf{x}-\mathbf{v}\|_2^2 + \lambda\|\mathbf{Ux}\|_2\right\}
\end{equation}
Let $\mathbf{s}$ be an element of the subdifferential of $\|\cdot\|_2$ at $\mathbf{0}$. Then if $\mathbf{x}=\mathbf{0}$, solving the following zero subgradient equation gives
\[
\mathbf{0}-\mathbf{v} + \lambda \mathbf{U}^T \mathbf{s}=\mathbf{0} \Longleftrightarrow \mathbf{v} = \lambda \mathbf{U}^T \mathbf{s}\Longleftrightarrow \mathbf{U}^{-1}\mathbf{v} = \lambda \mathbf{s} \Longleftrightarrow \|\mathbf{U}^{-1}\mathbf{v}\|_2\le \lambda
\]
Thus, we obtain that if $\|\mathbf{U}^{-1}\mathbf{v}\|_2\le \lambda$, the optimal minimizer is $\mathbf{0}$.

When $\mathbf{x}\neq \mathbf{0}$, we have
\[
\nabla f_{\text{GL\_ub}}(\mathbf{x})=\mathbf{x}-\mathbf{v} + \frac{\lambda\mathbf{U}^T\mathbf{Ux}}{\|\mathbf{Ux}\|_2}=0 \Longleftrightarrow \left(\mathbf{I}+\frac{\lambda\mathbf{U}^T\mathbf{U}}{\|\mathbf{Ux}\|_2}\right)\mathbf{x}=\mathbf{v}\Longleftrightarrow \mathbf{x}=\left(\mathbf{I}+\frac{\lambda\mathbf{U}^T\mathbf{U}}{\|\mathbf{Ux}\|_2}\right)^{-1}\mathbf{v}.
\]
Thus, we reformulate the optimality condition as $\mathbf{x}=T(\mathbf{x})$ for the case of $\mathbf{x}\neq \mathbf{0}$, where $T$ is the operator
\begin{equation}\label{fixed-operator-T}
T(\mathbf{x}):=\left(\mathbf{I}+\frac{\lambda\mathbf{U}^T\mathbf{U}}{\|\mathbf{Ux}\|_2}\right)^{-1}\mathbf{v}.
\end{equation}
Then we have the following result concerning $T(\mathbf{x})$.
\begin{thm}\label{fixed-point-thm}
$T(\mathbf{x})$ has a unique fixed point $\mathbf{x}^*$. In other words, the corresponding fixed point iteration
\[
\mathbf{x}^{(k+1)}:=T(\mathbf{x}^{(k)})
\]
converges to a unique $\mathbf{x}^*$.
\end{thm}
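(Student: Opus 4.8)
The plan is to exploit the fact that $T(\mathbf{x})$ depends on $\mathbf{x}$ only through the scalar $t:=\|\mathbf{U}\mathbf{x}\|_2$, so the $n$-dimensional iteration collapses to a one-dimensional one. Writing $\mathbf{U}=\operatorname{diag}(u_1,\ldots,u_n)$ and
\[
\phi(t):=\left(\mathbf{I}+\tfrac{\lambda}{t}\mathbf{U}^T\mathbf{U}\right)^{-1}\mathbf{v},\qquad \phi(t)_i=\frac{t\,v_i}{t+\lambda u_i^2},
\]
we have $T(\mathbf{x})=\phi(\|\mathbf{U}\mathbf{x}\|_2)$. We are in the regime where $T$ is used, i.e.\ $\|\mathbf{U}^{-1}\mathbf{v}\|_2>\lambda$ (so $\mathbf{v}\neq\mathbf{0}$ and the sought minimizer is nonzero). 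Hence a nonzero $\mathbf{x}^*$ is a fixed point of $T$ iff $g(t^*)=t^*$ with $t^*:=\|\mathbf{U}\mathbf{x}^*\|_2>0$, where
\[
g(t):=\|\mathbf{U}\phi(t)\|_2=\left(\sum_{i=1}^{n}\frac{t^2u_i^2v_i^2}{(t+\lambda u_i^2)^2}\right)^{1/2},\qquad t>0,
\]
and then $\mathbf{x}^*=\phi(t^*)$ is recovered uniquely from $t^*$. So the first task is to show $g$ has a unique positive fixed point.

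For existence and uniqueness I would rewrite $g(t)=t$ as $h(t)=1$, where $h(t):=g(t)/t=\bigl(\sum_i u_i^2v_i^2/(t+\lambda u_i^2)^2\bigr)^{1/2}$. Each summand is nonincreasing in $t$, and since $\mathbf{v}\neq\mathbf{0}$ at least one is strictly decreasing, so $h$ is continuous and strictly decreasing on $(0,\infty)$. A direct limit computation gives $h(0^+)=\|\mathbf{U}^{-1}\mathbf{v}\|_2/\lambda>1$ and $h(+\infty)=0$, so by the intermediate value theorem there is exactly one $t^*\in(0,\infty)$ with $h(t^*)=1$. This yields the unique fixed point $\mathbf{x}^*=\phi(t^*)$ of $T$: any nonzero fixed point $\bar{\mathbf{x}}$ produces $\bar t=\|\mathbf{U}\bar{\mathbf{x}}\|_2$ solving $g(\bar t)=\bar t$, hence $\bar t=t^*$ and $\bar{\mathbf{x}}=\phi(t^*)$.

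For convergence I would track $t^{(k)}:=\|\mathbf{U}\mathbf{x}^{(k)}\|_2$; from $\mathbf{x}^{(k+1)}=\phi(t^{(k)})$ we get $t^{(k+1)}=g(t^{(k)})$, with $t^{(0)}>0$ (as $\mathbf{x}^{(0)}\neq\mathbf{0}$ and $\mathbf{U}$ is invertible) and $t^{(k)}>0$ for all $k$ since $g>0$ on $(0,\infty)$. The two ingredients are: $g$ is \emph{strictly increasing} on $(0,\infty)$, clear from $g(t)^2=\sum_i u_i^2v_i^2\bigl(t/(t+\lambda u_i^2)\bigr)^2$ with each factor increasing; and $g(t)-t=t\,(h(t)-1)$ has the sign of $t^*-t$ by monotonicity of $h$. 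If $t^{(0)}<t^*$ then $t^{(0)}<g(t^{(0)})=t^{(1)}$ and $t^{(1)}=g(t^{(0)})<g(t^*)=t^*$, and inductively $(t^{(k)})$ is increasing and bounded above by $t^*$; it converges, and by continuity of $g$ the limit is a fixed point of $g$, hence $t^*$. The case $t^{(0)}>t^*$ is symmetric (decreasing, bounded below by $t^*$), and $t^{(0)}=t^*$ is trivial. Since $\phi$ is continuous at $t^*>0$, $\mathbf{x}^{(k+1)}=\phi(t^{(k)})\to\phi(t^*)=\mathbf{x}^*$.

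I expect the only real subtlety to be bookkeeping: ensuring the iterates never leave the domain where $T$ is defined (i.e.\ $t^{(k)}>0$) and invoking the standing assumption $\|\mathbf{U}^{-1}\mathbf{v}\|_2>\lambda$ so that $h(0^+)>1$. Note that $T$ need not be a contraction (indeed $g$ need not be concave, since $t\mapsto(t/(t+a))^2$ changes convexity at $t=a/2$), so a Banach fixed-point argument is unavailable and the monotone-sequence argument above is the natural route.
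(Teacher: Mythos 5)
Your proposal is correct, and it follows the same underlying idea as the paper's proof: $T$ depends on $\mathbf{x}$ only through $\|\mathbf{U}\mathbf{x}\|_2$, so everything collapses to a scalar recursion whose monotonicity drives convergence. The difference is in the order of operations, and it matters. The paper first shows the sequences $\{\|\mathbf{U}\mathbf{x}^{(k)}\|_2\}$ and $\{\rho_i^{(k)}\}$ are monotone and bounded (below by $0$ in the decreasing case), concludes they converge to some $c$, and only afterwards shows the limit equation $1=\sum_i (u_iv_i)^2/(c+\lambda u_i^2)^2$ has a unique positive root; this leaves two small gaps --- it never rules out $c=0$ (in which case the division by $c^2$ used to derive the limit equation is invalid and the limit is not a fixed point of $T$ at all), and it never proves a positive root \emph{exists}, only that it is unique if it exists. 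Your route closes both: you first establish existence and uniqueness of $t^*>0$ via the strictly decreasing $h(t)=g(t)/t$ with $h(0^+)=\|\mathbf{U}^{-1}\mathbf{v}\|_2/\lambda>1$ and $h(+\infty)=0$, and then use $t^*$ itself as the monotone bound (increasing and bounded above by $t^*$, or decreasing and bounded below by $t^*$), so the limit is automatically positive and equals $t^*$. You also make explicit the standing hypothesis $\|\mathbf{U}^{-1}\mathbf{v}\|_2>\lambda$, which the paper uses only implicitly (it is exactly the condition under which the minimizer is nonzero and $T$ is the relevant operator); without it $h(0^+)\le 1$ and $T$ has no fixed point in its domain, so the theorem as stated needs this qualifier. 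Finally, your observation that the correct elementary comparison is $|x_i^{(2)}|\le|x_i^{(1)}|$ (rather than the signed inequality $x_i^{(2)}<x_i^{(1)}$ the paper writes, which presumes $v_i>0$) is a worthwhile cleanup, though it does not affect the conclusion since only $\|\mathbf{U}\mathbf{x}^{(k)}\|_2$ enters the recursion.
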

\begin{proof}
Since $\mathbf{U}$ is a diagonal matrix, $\mathbf{U}^T\mathbf{U}=\mathbf{U}^2$. Let $\mathbf{G}=\mathbf{I}+\frac{\lambda\mathbf{U}^T\mathbf{U}}{\|\mathbf{Ux}\|_2}$, then $\mathbf{G}$ is a diagonal matrix with elements $G_{ii}=1+\frac{\lambda u_i^2}{\|\mathbf{Ux}\|_2}, i=1,2,\ldots,n$. Thus, $\mathbf{G}^{-1}$ is also a diagonal matrix with elements
\begin{equation}\label{fixed-point-def-rho-i}
0<\rho_i=\frac{\|\mathbf{Ux}\|_2}{\|\mathbf{Ux}\|_2+\lambda u_i^2}=1-\frac{\lambda u_i^2}{\|\mathbf{Ux}\|_2+\lambda u_i^2}<1.
\end{equation}
Then we have $x_i=\rho_i v_i$ for each $i$. Let $\boldsymbol\rho$ be a vector whose $i$-th entry is $\rho_i$ with $\rho_i\in(0,1)$. So, $\mathbf{x}=\boldsymbol\rho\odot \mathbf{v}$ where $\odot$ is the element-wise Hadamard product, which indicates $\mathbf{x}$ is a contracted version of $\mathbf{v}$. Let $\rho_i(y)=1-\frac{\lambda u_i^2}{y+\lambda u_i^2}$ with $y=\|\mathbf{Ux}\|_2>0$. Then
\[
\rho_i^{\prime}(y)=\frac{\lambda u_i^2}{(y+\lambda u_i^2)^2}>0.
\]
which shows $\rho_i$ is a strictly increasing function of $y$, i.e., $\|\mathbf{Ux}\|_2$.
By \eqref{fixed-point-def-rho-i}, a smaller (bigger) $\|\mathbf{Ux}\|_2$ gives a smaller (bigger) $\rho_i$ for each $i$ which in turn generates smaller (bigger) $x_i$ via $x_i=\rho_i v_i$ for each $i$, and then small (greater) $\|\mathbf{Ux}\|_2$. Thanks to this interplay between $\|\mathbf{Ux}\|_2$ and $\boldsymbol\rho$, the sequences regarding $\|\mathbf{Ux}\|_2$ and $\boldsymbol\rho$ generated by performing $T(\mathbf{x})$ are monotone. For example, let us start the iteration with $\mathbf{x}^{(0)}\neq\mathbf{0}$. Then $\mathbf{x}^{(1)}=\boldsymbol\rho^{(0)}\odot \mathbf{v}$. Suppose $\|\mathbf{U}\mathbf{x}^{(0)}\|_2>\|\mathbf{U}\mathbf{x}^{(1)}\|_2$, then
\begin{equation}\label{fixed-point-rho}
\rho_i^{(0)}=1-\frac{\lambda u_i^2}{\|\mathbf{U}\mathbf{x}^{(0)}\|_2+\lambda u_i^2}>1-\frac{\lambda u_i^2}{\|\mathbf{U}\mathbf{x}^{(1)}\|_2+\lambda u_i^2}=\rho_i^{(1)},\quad i=1,2,\ldots,n.
\end{equation}
Thus,
\[
x_i^{(2)}=\rho_i^{(1)}v_i<\rho_i^{(0)}v_i=x_i^{(1)},\quad i=1,2,\ldots,n.
\]
Since $\mathbf{U}$ is a diagonal matrix with nonnegative diagonal entries $u_i$, we have
\[
\|\mathbf{U}\mathbf{x}^{(2)}\|_2=\sqrt{\sum_{i=1}^{n} (u_ix_i^{(2)})^2}<\sqrt{\sum_{i=1}^{n} (u_ix_i^{(1)})^2}=\|\mathbf{U}\mathbf{x}^{(1)}\|_2
\]
So, $\|\mathbf{U}\mathbf{x}^{(0)}\|_2>\|\mathbf{U}\mathbf{x}^{(1)}\|_2>\|\mathbf{U}\mathbf{x}^{(2)}\|_2$.
Substituting $\|\mathbf{U}\mathbf{x}^{(1)}\|_2$ and $\|\mathbf{U}\mathbf{x}^{(2)}\|_2$ into \eqref{fixed-point-rho}, we get $\rho_i^{(1)}>\rho_i^{(2)}>\rho_i^{(3)}$. By repeating this, the contraction interplay between $\|\mathbf{U}\mathbf{x}^{(k)}\|_2$ and $\rho_i{(k)}$ lead to that both $\{\|\mathbf{U}\mathbf{x}^{(k)}\|_2\}$ and $\{\rho_i^{(k)}\}_{i=1,2,\ldots,n}$ are decreasing sequences. Also, $\{\|\mathbf{U}\mathbf{x}^{(k)}\|_2\}$ and $\{\rho_i^{(k)}\}$ are both bounded below by $0$. Since monotone bounded sequences converge, $\{\|\mathbf{U}\mathbf{x}^{(k)}\|_2\}$ and $\{\rho_i^{(k)}\}$ are convergent. Assuming $\lim_{k\to \infty}\|\mathbf{U}\mathbf{x}^{(k)}\|_2=c$
and $\lim_{k\to \infty}\rho_i^{(k)}=\rho_i$
yield
\[
x_i=\frac{cv_i}{c+\lambda u_i^2},\quad i=1,2,\ldots,n.
\]
Multiplying both sides by $u_i$, squaring both sides and summing over $i$ gives
\begin{equation}\label{fixed-point-c}
c^2=\sum_{i=1}^{n}(u_ix_i)^2=\sum_{i=1}^{n}\frac{(cu_iv_i)^2}{(c+\lambda u_i^2)^2} \Longrightarrow 1= \sum_{i=1}^{n}\frac{(u_iv_i)^2}{(c+\lambda u_i^2)^2}
\end{equation}
The solution to the equation on the RHS of \eqref{fixed-point-c} is $c$ which is unique since $c=\|\mathbf{U}\mathbf{x}^*\|_2>0$. We can show this by contradiction. Specifically, suppose $c'>c$ is the solution to \eqref{fixed-point-c}. Since $\lambda,u_i>0, i=1,2,\ldots,n$, then we get
\[
\frac{(u_iv_i)^2}{(c'+\lambda u_i^2)^2}<\frac{(u_iv_i)^2}{(c+\lambda u_i^2)^2}\Longrightarrow\sum_{i=1}^{n}\frac{(u_iv_i)^2}{(c'+\lambda u_i^2)^2}<\sum_{i=1}^{n}\frac{(u_iv_i)^2}{(c+\lambda u_i^2)^2}=1
\]
which contradicts the supposition $c'>c$ is the solution to \eqref{fixed-point-c}, i.e., $\sum_{i=1}^{n}\frac{(u_iv_i)^2}{(c'+\lambda u_i)^2}=1$. Similar arguments hold for  the case when $c'<c$. Thus, $c$, i.e., $\|\mathbf{U}\mathbf{x}^*\|_2$ is unique. Furthermore, $\boldsymbol \rho$ is unique because of $\rho_i=1-\frac{\lambda u_i^2}{\|\mathbf{Ux}\|_2+\lambda u_i^2}, i=1,2,\ldots,n$.

For the case of $\|\mathbf{U}\mathbf{x}^{(0)}\|_2<\|\mathbf{U}\mathbf{x}^{(1)}\|_2$, similar arguments give increasing and bounded sequences $\{\|\mathbf{U}\mathbf{x}^{(k)}\|_2\}$ and $\{\rho_i^{(k)}\}$. Thus, they are convergent as well. If $\|\mathbf{U}\mathbf{x}^{(0)}\|_2=\|\mathbf{U}\mathbf{x}^{(1)}\|_2$, we luckily hit the fixed point in one step. Finally, $\mathbf{x}^*$ is unique due to $\mathbf{x}^*=\boldsymbol\rho\odot\mathbf{v}$. Therefore, $T(\mathbf{x})$ is a fixed point operator. This completes our proof.
\end{proof}

\subsection{The bounds on the optimal value of the $\ell_1$ sparse overlapping group lasso}
We have found the bounds for the overlapping group lasso operator in the previous section. Now it is natural to transform the bounds on the optimal value of the $\ell_1$ sparse overlapping group lasso into solving two problems.

\subsubsection{Lower bound on the optimal value of the $\ell_1$ sparse overlapping group lasso}
The lower bound can be obtained by solving the following problem.
\begin{equation}\label{problem-lb-on-L1-OGL}
\min_{\mathbf{x}\in\R^n}\left\{ f_{\ell_1\_\text{GL\_lb}}(\mathbf{x}):= \frac{1}{2}\|\mathbf{x}-\mathbf{v}\|_2^2 + \lambda\|\mathbf{Lx}\|_1+\lambda_1\|\mathbf{x}\|_1\right\}.
\end{equation}
Since $\mathbf{L}$ is a diagonal matrix, it can be rewritten as
\begin{equation}\label{problem-lb-on-L1-OGL-compact}
\min_{\mathbf{x}\in\R^n}\left\{ f_{\ell_1\_\text{GL\_lb}}(\mathbf{x}):= \frac{1}{2}\|\mathbf{x}-\mathbf{v}\|_2^2 + \|(\lambda\mathbf{L}+\lambda_1\mathbf{I})\mathbf{x}\|_1\right\}.
\end{equation}
which shares the same form as \eqref{problem-lb-on-OGL} and can be solved in a similar way. For brevity, we present its solution directly as follows.
\[
x_i=\begin{cases}
v_i - \lambda l_i, & \mbox{if } v_i > \lambda l_i + \lambda_1 \\
0, & \mbox{if } |v_i|\le \lambda l_i + \lambda_1 \\
v_i + \lambda l_i, & \mbox{if }v_i < -\lambda l_i- \lambda_1.
\end{cases}
\quad i=1,2,\ldots,n.
\]


\subsubsection{Upper bound on the optimal value of the $\ell_1$ sparse overlapping group lasso}
The upper bound can be obtained by solving the following problem.
\begin{equation}\label{problem-ub-on-L1-OGL}
\min_{\mathbf{x}\in\R^n}\left\{ f_{\ell_1\_\text{GL\_ub}}(\mathbf{x}):= \frac{1}{2}\|\mathbf{x}-\mathbf{v}\|_2^2 + \lambda\|\mathbf{Ux}\|_2+\lambda_1\|\mathbf{x}\|_1\right\}.
\end{equation}
Let $\boldsymbol\mu$ and $\boldsymbol\nu$ be the subdifferentials of $\|\mathbf{x}\|_2$ and $\|\mathbf{x}\|_1$ at $\mathbf{x}= \mathbf{0}$, where $\|\boldsymbol\mu\|_2\le 1$ and $\|\boldsymbol\nu\|_{\infty}\le1$. $\Delta$ and $\Lambda$ are defined as
\[
\Delta=\{\boldsymbol\mu\in\R^n\mid\|\boldsymbol\mu\|_2\le 1\},\quad \Lambda=\{\boldsymbol\nu\in\R^n\mid\|\boldsymbol\nu\|_{\infty}\le 1\}.
\]
If $\mathbf{x}=\mathbf{0}$, using the first-order optimality condition gives
\[
\mathbf{0}\in \mathbf{0}-\mathbf{v}+ \lambda \mathbf{U}^T\boldsymbol\mu+\lambda_1\boldsymbol\nu \Longleftrightarrow \mathbf{U}^{-1}\mathbf{v}\in\lambda \boldsymbol\mu+\lambda_1\mathbf{U}^{-1}\boldsymbol\nu \Longleftrightarrow \mathbf{U}^{-1}(\mathbf{v}-\lambda_1\boldsymbol\nu)=\lambda \boldsymbol\mu \Longleftrightarrow \max_{\boldsymbol\nu\in\Lambda}\,\|\mathbf{U}^{-1}(\mathbf{v}-\lambda_1\boldsymbol\nu)\|_2\le\lambda
\]
Since $\mathbf{U}^{-1}$ is also a diagonal matrix with positive diagonal entries, the maximum value of $\|\mathbf{U}^{-1}(\mathbf{v}-\lambda_1\boldsymbol\nu)\|_2$ is attained at $\boldsymbol\nu=-\operatorname{sgn}(\mathbf{v})\odot\mathbf{1}$. Here, $\operatorname{sgn}$ is an elementwise sign function whose value is $1$ for positive inputs, $-1$ for negative inputs and 0 otherwise, and $\mathbf{1}$ is a vector with all entries being $1$. With these settings, we have
\begin{equation*}
\max_{\boldsymbol\nu\in\Lambda}\,\|\mathbf{U}^{-1}(\mathbf{v}-\lambda_1\boldsymbol\nu)\|_2
=\|\mathbf{U}^{-1}\left(\mathbf{v}+\lambda_1\operatorname{sgn}(\mathbf{v})\odot\mathbf{1}\right)\|_2
=\sqrt{\sum_{i=1}^{n}\left(\frac{v_i+\lambda_1\operatorname{sgn}(v_i)}{u_i}\right)^2}\le \lambda
\end{equation*}
Hence, we get that $\mathbf{x}=\mathbf{0}$ if and only if $\|\mathbf{U}^{-1}\left(\mathbf{v}+\lambda_1\operatorname{sgn}(\mathbf{v})\odot\mathbf{1}\right)\|_2\le \lambda$.

Now we talk about the case of $\mathbf{x}\neq\mathbf{0}$. By the first-order optimality condition, we have
\[
\mathbf{0}\in \mathbf{x}-\mathbf{v}+ \lambda \frac{\mathbf{U}^T\mathbf{Ux}}{\|\mathbf{Ux}\|_2}+\lambda_1\mathbf{x}'
\]
where $\mathbf{x}'$ denotes the subdifferential of $\|\mathbf{x}\|$ at $\mathbf{x}\neq\mathbf{0}$ defined as follows.
\[x'_i=
\begin{cases}
  1, & \mbox{if } x_i>0 \\
  -1, & \mbox{if } x_i<0 \\
  \nu\in[-1,1], & \mbox{if } x_i=0.
\end{cases}
\]
If $x_i=0$, we have
\[
x_i=0\Longleftrightarrow 0\in 0-v_i+0+\lambda_1\nu \Longleftrightarrow v_i\in\lambda_1\nu\Longleftrightarrow |v_i|\le \lambda_1.
\]
By contradiction, if $x_i\neq0$, it is clear to see that the optimal $x_i$ shares the common sign with $v_i$, otherwise it will lead to greater objective values. Thus, we get
\[
\mathbf{\tilde{x}}+ \lambda \frac{\mathbf{\tilde{U}}^T\mathbf{\tilde{U}\tilde{x}}}{\|\mathbf{\tilde{U}\tilde{x}}\|_2}
=\mathbf{\tilde{v}}-\lambda_1\operatorname{sgn}(\mathbf{\tilde{v}})\odot\mathbf{1}
\Longleftrightarrow \mathbf{\tilde{x}}=T_{\ell_1\text{\_ub}}\left(\mathbf{I}+\lambda \frac{\mathbf{\tilde{U}}^T\mathbf{\tilde{U}\tilde{x}}}{\|\mathbf{\tilde{U}\tilde{x}}\|_2} \right)^{-1}(\mathbf{\tilde{v}}-\lambda_1\operatorname{sgn}(\mathbf{\tilde{v}})\odot\mathbf{1})
\]
where $\mathbf{\tilde{x}}$ represents the reduced $\mathbf{x}$ after removing zero entries, and $\mathbf{\tilde{v}},\mathbf{\tilde{U}}$ are the corresponding notations. By Theorem \ref{fixed-point-thm}, $T_{\ell_1\text{\_ub}}(\mathbf{\tilde{x}})$ is a fixed point operator.

\subsection{The bounds on the optimal value of the $\ell_0$ sparse overlapping group lasso}
\subsubsection{Lower bound on the optimal value of the $\ell_0$ sparse overlapping group lasso}
The lower bound can be obtained by solving the following problem.
\begin{equation}\label{problem-lb-on-L0-OGL-subp}
\min_{\mathbf{x}\in\R^n}\left\{ f_{\ell_0\_\text{GL\_lb}}(\mathbf{x}):= \frac{1}{2}\|\mathbf{x}-\mathbf{v}\|_2^2 + \lambda\|\mathbf{Lx}\|_1+\lambda_0\|\mathbf{x}\|_0\right\}.
\end{equation}
which is separable and can be divided into subproblems as follows.
\begin{equation}\label{problem-lb-on-L0-OGL-subp}
\min_{x_i\in\R}\left\{ f_{\ell_0\_\text{GL\_lb}}(x_i):= \frac{1}{2}(x_i-v_i)^2 + \lambda l_i\|x_i\|_1+\lambda_0\|x_i\|_0\right\}.
\end{equation}

If $x_i\neq0$, then \eqref{problem-lb-on-L0-OGL-subp} can be reduced to solving the following simpler problem.
\[
\min_{x_i\in\R} f_{\ell_0\_\text{GL\_lb}}(x_i)=\frac{1}{2}(x_i-v_i)^2 + \lambda l_i|x_i|+\lambda_0
\]
whose solution is given by \eqref{soft-thresholding-operator-scalar}, namely, $x_i=v_i-\lambda l_i\operatorname{sgn}(v_i)$. In this case, if $|v_i|>\lambda l_i$, the corresponding objective value is
\[
f_{\ell_0\_\text{GL\_lb}}(v_i-\lambda l_i\operatorname{sgn}(v_i))=\frac{1}{2}(\lambda l_i)^2 + \lambda l_i|v_i-\lambda l_i\operatorname{sgn}(v_i)|+\lambda_0,
\]
and if $|v_i|\le\lambda l_i$, we are done and definitely $x_i=0$. However, in the case of $|v_i|>\lambda l_i$, we still need to compare $\frac{1}{2}(\lambda l_i)^2 + \lambda l_i|v_i-\lambda l_i\operatorname{sgn}(v_i)|+\lambda_0$ with $f_{\ell_0\_\text{GL\_lb}}(0)=\frac{1}{2}v_i^2$ due to the existence of the additional term $\lambda_0$. If $f_{\ell_0\_\text{GL\_lb}}(0)\le f_{\ell_0\_\text{GL\_lb}}(v_i-\lambda l_i\operatorname{sgn}(v_i))$, the solution is $0$ rather than $v_i-\lambda l_i\operatorname{sgn}(v_i)$.

\subsubsection{Upper bound on the optimal value of the $\ell_0$ sparse overlapping group lasso}
The upper bound can be obtained by solving the following problem.
\begin{equation}\label{problem-ub-on-L0-OGL}
\min_{\mathbf{x}\in\R^n}\left\{ f_{\ell_0\_\text{GL\_ub}}(\mathbf{x}):= \frac{1}{2}\|\mathbf{x}-\mathbf{v}\|_2^2 + \lambda\|\mathbf{Ux}\|_2+\lambda_0\|\mathbf{x}\|_0\right\}.
\end{equation}
By the definition of the induced norm of $\|\cdot\|_{a,b}$, we have $\|\mathbf{Ux}\|_2\le \|\mathbf{U}\|_2\|\mathbf{x}\|_2$. When $a=b=2$, $\|\mathbf{U}\|_2$ is called the spectral norm and it is equal to the maximum singular value of $\mathbf{U}$, denoted as $\sigma_{\mathrm{max}}(\mathbf{U})$. Thus, the upper bound can be relaxed as follows.
\begin{equation}\label{problem-ub-on-L0-OGL-relaxed}
\min_{\mathbf{x}\in\R^n}\left\{ f_{\ell_0\_\text{GL\_ub}}(\mathbf{x}):= \frac{1}{2}\|\mathbf{x}-\mathbf{v}\|_2^2 + \lambda\sigma_{\mathrm{max}}(\mathbf{U})\|\mathbf{x}\|_2+\lambda_0\|\mathbf{x}\|_0\right\}.
\end{equation}
which has a closed-form solution proposed by \cite{DBLP:journals/sensors/ShaoZCPPLWM22}.

\section{Acknowledgement}
The inspiration of this work is from reading \cite{Boyd2011DistributedOA}.

\bibliographystyle{apalike}
\bibliography{mybib}

\end{document}